\newcommand{\R}{\mathbb{R}}
\newcommand{\E}{\mathbb{E}}
\DeclareMathOperator*{\flopt}{fl}
\newtheorem{model}{Model}[section]
\numberwithin{theorem}{section}
\newcommand{\TheTitle}{A Refined Probabilistic Error Bound for Sums} 
\newcommand{\TheAuthors}{Eric Hallman}
\title{{\TheTitle}\thanks{This research was supported in part by the National Science Foundation through grant DMS-1745654.}}
\author{
  Eric Hallman \thanks{Department of Mathematics, North Carolina State University (\email{erhallma@ncsu.edu})}
}
\begin{document}

\maketitle

\begin{abstract}
	This paper considers a probabilistic model for floating-point computation in which the roundoff errors are represented by bounded random variables with mean zero. Using this model, a probabilistic bound is derived for the forward error of the computed sum of $n$ real numbers. This work improves upon existing probabilistic bounds by holding to all orders, and as a result provides informative bounds for larger problem sizes. 
\end{abstract}

%--------------------------------------------------------------------------------------
\begin{keywords}
Rounding error analysis, floating-point arithmetic, random variables, martingales
\end{keywords}

\begin{AMS}
65F30, 65G50, 60G42, 60G50
\end{AMS}
%--------------------------------------------------------------------------------------

\section{Introduction}

It is generally known that deterministic error bounds for the computation of sums in finite precision are pessimistic in practice. Classical bounds grow proportionally to $nu$ where $n$ is the problem size and $u$ is the unit roundoff, but these bounds can be entirely uninformative when solving moderately large problems using low precision formats such as IEEE fp16 (where $u\approx 5\times 10^{-4}$) \cite{fp16} or bfloat16 (where $u\approx 4\times 10^{-3}$) \cite{bfloat16}. 

For decades a rule of thumb has suggested that the error will typically grow proportionally to $\sqrt{n}u$ instead. Higham and Mary \cite{higham2019new} show how to turn this rule of thumb into a rigorous error bound by making the useful (if not always realistic) assumption that the rounding errors in successive computations can be modeled as zero-mean independent random variables. They note that while the probabilistic bound in \cite{higham2019new} tends to be much closer to the actual error than the classical deterministic bounds are, it is still pessimistic by a factor of around $\sqrt{n}$ when the numbers to be summed are sampled from a zero-mean distribution. 

Higham and Mary derive a second bound in \cite{higham2020sharper} that both gives sharper results for random zero-mean data and weakens the assumption of independence of the rounding errors. Connolly, Higham, and Mary conclude in \cite{connolly2021stochastic} that this second probabilistic bound will hold unconditionally when using stochastic rounding, where by contrast it still provides only a rule of thumb for deterministic rounding. 

\subsection{Contributions}
In this article we refine the probabilistic bound of \cite{higham2020sharper} so that it holds to all orders. The earlier bound holds to first order only, and consequently works well when $nu<1$ but is less informative for larger $n$. Our refined version gives useful error bounds as long as $\lambda \sqrt{n}u < 1$, where $\lambda$ is a parameter that grows very slowly with $n$. 

The paper contains four main theorems.
\begin{itemize}
	\item Theorem \ref{thm:simpleBound} gives a structural result under the simplifying assumption that the rounding errors are independent random variables with mean zero. Theorem \ref{thm:simpleGeneral} uses this result to give {\it a priori} bounds under the assumption that the data to be summed are drawn randomly from a given interval. 
	\item Theorem \ref{thm:mainStructural} gives a slightly weaker structural bound under the more lenient assumption that the rounding errors are mean-independent rather than independent. Theorem \ref{thm:mainGeneral} uses this result to give {\it a priori} bounds in the same manner as Theorem \ref{thm:simpleGeneral}. 
	\end{itemize}
Theorems \ref{thm:mainStructural} and \ref{thm:mainGeneral} will hold unconditionally for stochastic rounding. 

Numerical experiments show that our bounds are pessimistic by about an order of magnitude, but accurately describe the growth rate of the forward error up to the point where $\lambda \sqrt{n}u\approx 1$. Beyond this point the sums computed using recursive summation have little relative accuracy in practice, and so our bounds suffice for practical purposes. 

\section{Background}
In this section we introduce notation, our models for the roundoff errors and data, and some useful tools from probability theory. 

\subsection{Notation}
Let $x_1,\ldots,x_n$ be the data to be summed. For each $k = 1:n$ we denote the exact and computed partial sums by $s_k = \sum_{i=1}^k x_i$ and  $\hat{s}_k = \flopt\left(\sum_{i=1}^k x_i\right)$, respectively. The computed sums $\hat{s}_k$ may be expressed through the recurrence
\[\hat{s}_k = \begin{cases}
	s_1 & k = 1, \\
	(\hat{s}_{k-1} + x_k)(1+\delta_k) & 2\leq k \leq n,
\end{cases}\]
where $\delta_k$ denotes the relative perturbation due to roundoff at the $k$-th step. For convenience, we let $\delta_1=0$. We define $E_k = \hat{s}_k - s_k$ to represent the forward error in the computation, and define ${\bf s}_k = [s_2,\ldots,s_k]$ for notational convenience.

Finally, we define the function $\lambda:(0,1)\rightarrow \R$ by 
\begin{equation}
	\lambda(\delta) = \sqrt{2\log(2/\delta)}.
\end{equation}
This expression appears frequently in our error bounds and illustrates how the probabilistic bound changes with respect to the desired failure probability $\delta$ (and possibly also the problem size). As noted in \cite{ipsen2020probabilistic},  $\lambda(\delta)$ grows very slowly as $\delta$ approaches zero. Even when $\delta = 10^{-16}$, for example, we would have $\lambda(\delta) \leq 9$. 

\subsection{Models for roundoff error and data}
We use the standard model for floating-point arithmetic \cite{higham2002accuracy}:
\begin{equation}\label{model:classical}
	\flopt(x \text{ op } y) = (x \text{ op } y)(1 + \delta), \quad |\delta| \leq u,\quad \text{ op } \in \{+,-,\times, /, \sqrt{}\},
\end{equation}
where $u$ is the unit roundoff. This model holds for IEEE arithmetic, with the exception of denormalized numbers. For the stochastic rounding system \cite{connolly2021stochastic}
\begin{equation}\label{eqn:stochRound}
	\flopt(x) = \begin{cases}
		\lceil x \rceil & \text{with probability } p = (x-\lfloor x\rfloor)/(\lceil x\rceil - \lfloor x\rfloor),\\
		\lfloor x \rfloor & \text{with probability } 1-p,
	\end{cases}
\end{equation}
the model will hold with $|\delta|\leq 2u$. Here $\lfloor x\rfloor$ and $\lceil x \rceil$ are respectively the largest and smallest floating-point numbers satisfying $\lfloor x\rfloor\leq x \leq \lceil x \rceil$.

Our first structural bound, given in Theorem \ref{thm:simpleBound}, relies on the following simplified model for rounding errors. 
\begin{model}\label{model:simple}
	Let the computation of interest generate rounding errors $\delta_1,\delta_2,\ldots$ in that order. The $\delta_k$ are independent random variables of mean zero satisfying $|\delta|\leq u$. 
\end{model}
When using stochastic rounding, Model \ref{model:simple} will hold with $|\delta_k|\leq 2u$ instead of $|\delta_k|\leq u$.

The second bound, given in Theorem \ref{thm:mainStructural}, uses a model that weakens the independence assumption to one of mean independence but tightens the boundedness condition. This new model still provides only a rule of thumb for deterministic roundoff errors, but it accurately describes the stochastic rounding system \eqref{eqn:stochRound}. 
\begin{model}\label{model:stochastic}
	Let the computation of interest generate rounding errors $\delta_1,\delta_2,\ldots$ in that order. The $\delta_k$ are random variables of mean zero satisfying \[\E(\delta_k | \delta_1,\ldots,\delta_{k-1}) = \E(\delta_k)\ (=0).\]
	Furthermore, the $\delta_k$ satisfy $a_k\leq \delta\leq b_k$, where $a_k$ and $b_k$ are functions of the data and of $\delta_1,\ldots,\delta_{k-1}$ satisfying $b_k - a_k\leq 2u$.
\end{model}
Since the errors from deterministic rounding satisfy $|\delta_k|\leq u$, the boundedness condition in Model \ref{model:stochastic} is satisfied with $a_k = -u$ and $b_k = u$. For stochastic rounding, by contrast, the bound $b_k-a_k\leq 2u$ is stronger than the constant bound $|\delta_k|\leq 2u$.

Theorems \ref{thm:mainGeneral} and \ref{thm:simpleGeneral} give {\it a priori} error bounds under the assumption that the data are independent random variables drawn from an interval. 
\begin{model}[probabilistic model of the data]\label{model:data}
	The $x_i$, $i=1:n$, are independent random variables sampled from a given distribution of mean $\mu_x$ and satisfy $|x_i-\mu_x|\leq C_x$, $i=1:n$, where $C_x$ is a constant. 
\end{model}
Theorem \ref{thm:mainGeneral} also uses a generalized version of Model \ref{model:stochastic} to include the assumption that the $\delta_k$ are mean independent of the data $x_i$ in addition to the previous perturbations. 
\begin{model}[probabilistic model of rounding errors for recursive summation]\label{model:refined}
	Consider the computation of $s_n=\sum_{i=1}^nx_i$ by recursive summation for random data $x_i$, satisfying Model \ref{model:data}. The rounding errors $\delta_2,\ldots,\delta_n$ produced by the computation are random variables of mean zero where for all $k$, the $\delta_k$ are mean independent of the previous rounding errors and the data, in the sense that
	\begin{equation}
		\E(\delta_k|\delta_2,\ldots,\delta_{k-1},x_1,\ldots,x_n) = \E(\delta_k)\ (=0). 
	\end{equation}
	Furthermore, the $\delta_k$ satisfy $a_k\leq \delta\leq b_k$, where $a_k$ and $b_k$ are both functions of $\delta_1,\ldots,\delta_{k-1}$ and $x_1,\ldots,x_n$ satisfying $b_k - a_k\leq 2u$.
\end{model}
Theorm \ref{thm:simpleGeneral} generalizes Model \ref{model:simple} similarly, assuming that the rounding errors are fully independent of each other and of the data.

Model \ref{model:simple} is also used in \cite{higham2019new}. Models \ref{model:stochastic} and \ref{model:refined} are similar to those used in \cite{higham2020sharper}, but with the condition $|\delta_k|\leq u$ replaced by the condition $a_k\leq \delta_k\leq b_k$ with $b_k-a_k\leq 2u$. Model \ref{model:data} is similar to the model for the data used in \cite{higham2020sharper}, but the condition $|x_i-\mu_x|\leq C_x$ replaces what was originally $|x_i|\leq C_x$. 

\subsection{Probability Theory}
We use the following definition of a martingale \cite{mitzenmacher2005probability}.

\begin{definition} [martingale] A sequence of random variables $Z_1,\ldots, Z_n$ is a martingale with respect to the sequence $X_1,\ldots,X_n$ if, for all $k\geq 1$,
	\begin{itemize}
		\item $Z_k$ is a function of $X_1,\ldots,X_k$, 
		\item $\E[|Z_k|]<\infty$, and 
		\item $\E(Z_{k+1}| X_1,\ldots, X_{k}) = Z_{k}$. 
	\end{itemize}
\end{definition}

We also use the following definition of a predictable process. 
\begin{definition}[predictable process]
	The sequence $A_1,\ldots, A_n$ is a predictable process with respect to the sequence $X_1,\ldots,X_n$ if, for all $k$, $A_k$ is a function of $X_1,\ldots,X_{k-1}$. 
\end{definition}
The sequences $\{a_k\}$ and $\{b_k\}$ in Models \ref{model:stochastic} and \ref{model:refined} are predictable processes with respect to the rounding errors and the data. 

The two definitions above are used in the following concentration bound \cite{roch2015modern}.
\begin{lemma}[Azuma-Hoeffding inequality] Let $Z_1,\ldots,Z_n$ be a martingale with respect to a sequence $X_1,\ldots, X_n$. Let $\{A_k\}_{k=2}^n$ and $\{B_k\}_{k=2}^n$ be predictable processes such that for all $k = 2:n$
	\[
		A_k \leq Z_{k} - Z_{k-1} \leq B_k\quad \text{and}\quad B_k - A_k \leq 2c_k
	\]z
	where the $c_k$ are constants. Then for any $\delta \in (0,1)$, with failure probability at most $\delta$,
	\begin{equation}\label{eqn:Azuma}
		|Z_n-Z_1| \leq  \lambda \|{\bf c}_n\|_2,
	\end{equation}
	where ${\bf c}_n = [c_2,\ldots,c_k]$ and $\lambda = \lambda(\delta) = \sqrt{2\log(2/\delta)}$. 
	\label{lemma:Azuma}
\end{lemma}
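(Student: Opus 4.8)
The plan is to prove the inequality by the standard exponential (Chernoff) method applied to the martingale differences, adapted to the fact that the increment bounds $A_k,B_k$ are themselves random predictable processes. First I would write $D_k = Z_k - Z_{k-1}$ for $k = 2:n$, so that $Z_n - Z_1 = \sum_{k=2}^n D_k$. The martingale property $\E(Z_k\mid X_1,\ldots,X_{k-1}) = Z_{k-1}$ gives the conditional mean-zero relation $\E(D_k\mid X_1,\ldots,X_{k-1}) = 0$, while the hypotheses supply $A_k \leq D_k \leq B_k$ with $A_k,B_k$ functions of $X_1,\ldots,X_{k-1}$ only, and $B_k - A_k \leq 2c_k$.

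The central estimate is a conditional form of Hoeffding's lemma: for fixed $t$, a random variable $Y$ with $\E(Y)=0$ and $a\leq Y\leq b$ satisfies $\E(e^{tY})\leq e^{t^2(b-a)^2/8}$. Applying this to the conditional law of $D_k$ given $X_1,\ldots,X_{k-1}$ --- on which event $A_k,B_k$ are frozen to constants and $D_k$ has conditional mean zero --- yields
\begin{equation*}
\E\bigl(e^{tD_k}\mid X_1,\ldots,X_{k-1}\bigr) \leq e^{t^2(B_k-A_k)^2/8} \leq e^{t^2c_k^2/2}.
\end{equation*}
The crucial point is that the right-hand side is a deterministic constant even though $A_k,B_k$ are random; this is precisely what the uniform gap bound $B_k-A_k\leq 2c_k$ buys us.

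Next I would assemble the full moment generating function by peeling off the increments one at a time, from the largest index inward. Because $\prod_{k=2}^{m-1}e^{tD_k}$ is a function of $X_1,\ldots,X_{m-1}$, it factors out of the conditional expectation isolating $e^{tD_m}$, so iterating the tower property gives
\begin{equation*}
\E\bigl(e^{t(Z_n-Z_1)}\bigr) = \E\Bigl(\prod_{k=2}^n e^{tD_k}\Bigr) \leq \prod_{k=2}^n e^{t^2c_k^2/2} = e^{t^2\|{\bf c}_n\|_2^2/2}.
\end{equation*}
A Markov/Chernoff bound then gives $\prob(Z_n-Z_1\geq s)\leq e^{-ts + t^2\|{\bf c}_n\|_2^2/2}$ for every $t>0$; optimizing at $t = s/\|{\bf c}_n\|_2^2$ produces the sub-Gaussian tail $e^{-s^2/(2\|{\bf c}_n\|_2^2)}$. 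Running the identical argument on the martingale $-Z_k$ bounds the lower tail, and a union bound yields $\prob(|Z_n-Z_1|\geq s)\leq 2e^{-s^2/(2\|{\bf c}_n\|_2^2)}$. Setting the right-hand side equal to $\delta$ and solving for $s$ gives $s = \|{\bf c}_n\|_2\sqrt{2\log(2/\delta)} = \lambda\|{\bf c}_n\|_2$, which is the claimed threshold.

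I expect the main obstacle to be technical rather than conceptual: correctly justifying the conditional Hoeffding step when $A_k$ and $B_k$ are random. One must argue that conditioning on $X_1,\ldots,X_{k-1}$ freezes $A_k$ and $B_k$ to constants so that Hoeffding's lemma applies verbatim to the conditional distribution of $D_k$, and then verify that the resulting bound $e^{t^2c_k^2/2}$ is uniform in those frozen values by virtue of $B_k-A_k\leq 2c_k$. The remaining ingredients --- the iterated-expectation factorization, the Chernoff optimization over $t$, and the two-sided union bound --- are routine.
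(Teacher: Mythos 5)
Your argument is correct and is precisely the standard Chernoff-method proof of the Azuma--Hoeffding inequality with predictable increment bounds: the conditional Hoeffding lemma applied after conditioning freezes $A_k,B_k$, the tower-property factorization of the moment generating function, the optimization over $t$, and the two-sided union bound all go through exactly as you describe, and the uniform gap $B_k-A_k\leq 2c_k$ is indeed what makes the conditional bound deterministic. The paper does not prove this lemma itself but cites it from the literature, where this is the standard argument, so your proposal matches the intended proof.
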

The Azuma-Hoeffding inequality admits two useful generalizations. 
\begin{itemize}
	\item It is noted in e.g.~\cite{sason2011refined, mcdiarmid1998concentration} that for the Azuma-Hoeffding inequality and several other concentration bounds the term $|Z_n - Z_1|$ may be replaced by $\max_{1\leq k \leq n}|Z_k - Z_1|$. We refer to this generalization as the {\it maximal} version of the Azuma-Hoeffding inequality, and it will sometimes allow us to avoid having to apply a union bound. 
	\item The bounds on the differences $Z_k-Z_{k-1}$ may fail with small probability and a similar but weaker concentration inequality will hold \cite{chung2006concentration}. Specifically, if the inequalities $B_k - A_k\leq 2c_k$ hold simultaneously for all $k = 2:n$ with total failure probability at most $\eta$, then \eqref{eqn:Azuma} will still hold with failure probability at most $\delta + \eta$.
\end{itemize}

\section{First structural bound}
We begin by deriving a probabilistic bound under Model \ref{model:simple}, which assumes independence of the roundoff errors. To do this, we use the following expression for the forward error. 
\begin{lemma}\label{lemma:forwardError}
	Let $s_n = \sum_{i=1}^n x_i$. The forward error from recursive summation satisfies
	\begin{equation}\label{eqn:forwardError}
		E_n = \hat{s}_n - s_n = \sum_{k=2}^ns_k\delta_k\prod_{j=k+1}^n(1+\delta_j). 
	\end{equation}
\end{lemma}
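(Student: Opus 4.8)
The plan is to prove the formula
\[
E_n = \sum_{k=2}^n s_k \delta_k \prod_{j=k+1}^n (1+\delta_j)
\]
by induction on $n$, exploiting the recurrence $\hat{s}_k = (\hat{s}_{k-1}+x_k)(1+\delta_k)$ that defines the computed partial sums. The base case $n=1$ is immediate: since $\delta_1=0$ and $\hat{s}_1=s_1$, we have $E_1 = 0$, and the empty sum on the right-hand side also vanishes.

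For the inductive step, I would assume the formula holds for $E_{n-1}$ and compute $E_n$ directly. Writing $\hat{s}_n = (\hat{s}_{n-1}+x_n)(1+\delta_n)$ and $s_n = s_{n-1}+x_n$, I would expand
\[
E_n = \hat{s}_n - s_n = (\hat{s}_{n-1}+x_n)(1+\delta_n) - (s_{n-1}+x_n).
\]
The key algebraic move is to substitute $\hat{s}_{n-1} = s_{n-1} + E_{n-1}$ and collect terms. This should separate $E_n$ into a piece proportional to $E_{n-1}$ and a piece capturing the new rounding error $\delta_n$. Specifically I expect
\[
E_n = E_{n-1}(1+\delta_n) + s_n \delta_n,
\]
since $(s_{n-1}+x_n)(1+\delta_n) - (s_{n-1}+x_n) = s_n \delta_n$. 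The first term then absorbs the factor $(1+\delta_n)$ into each product in the inductive expression for $E_{n-1}$, shifting the upper limit of every product from $n-1$ to $n$, while the second term supplies the missing $k=n$ summand (whose product $\prod_{j=n+1}^n$ is empty and hence equals $1$).

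The main obstacle, such as it is, will be bookkeeping with the product indices rather than any genuine difficulty: I must verify that multiplying the inductive sum $\sum_{k=2}^{n-1} s_k \delta_k \prod_{j=k+1}^{n-1}(1+\delta_j)$ by $(1+\delta_n)$ correctly produces $\sum_{k=2}^{n-1} s_k \delta_k \prod_{j=k+1}^{n}(1+\delta_j)$, and that appending the new $s_n\delta_n$ term (with its empty product) extends the range to $k=n$. Since each manipulation is a purely deterministic identity that holds for arbitrary values of the $\delta_k$, no probabilistic reasoning enters here; the randomness of the $\delta_k$ becomes relevant only in the later theorems that bound $E_n$.
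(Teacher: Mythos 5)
Your proposal is correct and follows essentially the same route as the paper: both derive the recurrence $E_k = E_{k-1}(1+\delta_k) + s_k\delta_k$ by substituting $\hat{s}_{k-1} = s_{k-1} + E_{k-1}$ into the floating-point recurrence, and then unwind it (the paper says ``unravel the recurrence,'' which is your induction made informal). The index bookkeeping you flag as the only potential obstacle works out exactly as you predict.
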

\begin{proof}
	From the model for roundoff error we have $\hat{s}_k = (\hat{s}_{k-1} + x_k)(1+\delta_k)$ for $k=2:n$. Subtracting $s_k$ from both sides gives
	\begin{align*}
		\hat{s}_k - s_k &= (\hat{s}_{k-1}-s_{k-1}+s_{k-1}+x_k)(1+\delta_k) - s_k\\
					&= (\hat{s}_{k-1} - s_{k-1} + s_k)(1+\delta_k) - s_k\\
					&= (\hat{s}_{k-1}-s_{k-1})(1+\delta_k) + s_k\delta_k. 
	\end{align*}
	The claim follows by unraveling the recurrence and using $\hat{s}_1 - s_1 = 0$. 
\end{proof}

A theorem by Higham and Mary \cite{higham2019new} allows us to bound the magnitude of the terms in \eqref{eqn:forwardError} with high probability. We present a modified version here. 
\begin{lemma}\label{lemma:deltaBound}
	Let $\delta_1,\ldots,\delta_n$ be independent random random variables of mean zero with $|\delta_k|\leq u$. Then for any $\delta\in(0,1)$, with failure probability at most $\delta$, 
	\[
		\max_{1\leq k \leq n} \left|\prod_{j=k+1}^n(1+\delta_j) - 1\right| \leq \tilde{\gamma}_n(\delta),
	\]
	where 
	\[
		\tilde{\gamma}_n(\delta) := \exp\left(\frac{\lambda(\delta)\sqrt{n}u + nu^2}{1-u}\right) - 1. 
	\]
\end{lemma}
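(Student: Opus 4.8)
The plan is to control the product $\prod_{j=k+1}^n(1+\delta_j)$ by passing to its logarithm, where the dependence on the $\delta_j$ becomes additive and the Azuma--Hoeffding inequality (Lemma~\ref{lemma:Azuma}) in its maximal form can be applied. Specifically, I would define $P_k = \prod_{j=k+1}^n(1+\delta_j)$ and study $\log P_k = \sum_{j=k+1}^n \log(1+\delta_j)$. Since we want to bound the \emph{maximum} over $k$ of $|P_k - 1|$ and the tail of the index runs from $k+1$ to $n$, it is cleanest to reverse the order of summation: set $Y_m = \sum_{j=n-m+1}^n \log(1+\delta_j)$ for $m = 0,1,\ldots,n-1$, so that $Y_0 = 0$ and the partial sums accumulate the rounding terms from the last index backward. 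Then $\max_k|\log P_k| = \max_m |Y_m - Y_0|$, which is exactly the quantity the maximal Azuma--Hoeffding inequality controls.

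The first substantive step is to verify the martingale hypothesis and compute the per-step bounds. Writing $W_j = \log(1+\delta_j)$, I would check that the centered partial sums $Z_m = \sum (W_j - \E W_j)$ form a martingale with respect to the $\delta_j$; independence and mean-zero of the $\delta_k$ give this immediately. The increment is $W_j - \E W_j$, and since $|\delta_j|\le u$ the variable $W_j$ lies in the interval $[\log(1-u),\log(1+u)]$, whose width is $2c$ with $c = \tfrac12\log\frac{1+u}{1-u}$; after centering, the increment is likewise bounded by an interval of width $2c$. Thus Azuma--Hoeffding yields, with failure probability at most $\delta$,
\[
\max_{m}\Bigl|\sum_{j}(W_j - \E W_j)\Bigr| \le \lambda(\delta)\sqrt{n}\,c.
\]
The remaining task is to bound the deterministic drift $\sum_j \E W_j = \sum_j \E\log(1+\delta_j)$ that the centering removed. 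Because $\E\delta_j = 0$, a Taylor/series estimate gives $|\E\log(1+\delta_j)| \le \E\bigl|\log(1+\delta_j)-\delta_j\bigr| \lesssim u^2$ per term, so the total drift is at most a small multiple of $nu^2$.

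Combining these pieces, $|\log P_k|$ is bounded by $\lambda\sqrt{n}\,c$ plus the drift contribution of order $nu^2$, uniformly in $k$ with failure probability $\delta$. Finally I would convert the logarithmic bound back to the multiplicative one via $|P_k - 1| \le e^{|\log P_k|} - 1$. The last step is a careful analytic estimate showing that the constant $c = \tfrac12\log\frac{1+u}{1-u}$ together with the drift bound assembles into precisely the exponent $\frac{\lambda\sqrt{n}u + nu^2}{1-u}$; this requires the elementary inequalities $\tfrac12\log\frac{1+u}{1-u} \le \frac{u}{1-u}$ and a matching bound on the drift term, both of which follow from comparing the series for $\log(1+u)$ and $\log(1-u)$ against a geometric series with ratio $u$.

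The main obstacle is bookkeeping in the conversion at the end: the clean Azuma bound gives a constant $c$ that is not literally $u$, and the drift gives a term that is not literally $nu^2$, so the work is in showing that both are dominated by the stated expression with the common denominator $1-u$. I expect the sharpest route is to bound $W_j - \E W_j \in [-\tfrac{u}{1-u} + O(u^2), \tfrac{u}{1-u}]$ or, more simply, to absorb the drift into the exponent by writing $\log P_k = \sum_j(W_j - \E W_j) + \sum_j \E W_j$ and bounding each sum separately, so that the $\lambda\sqrt{n}u$ and $nu^2$ terms appear naturally and share the factor $(1-u)^{-1}$. Care is needed to ensure the martingale difference bound genuinely has half-width at most $u/(1-u)$ so that the Azuma constant matches, which is where I would concentrate the detailed estimate.
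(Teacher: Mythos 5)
Your proposal is correct and follows essentially the same route as the paper, whose proof (by reference to Theorem 4.6 of the cited Connolly--Higham--Mary paper) likewise passes to the logarithm of the reversed partial products, applies the maximal Azuma--Hoeffding inequality to a martingale of partial sums, and absorbs an $O(nu^2/(1-u))$ Taylor correction into the exponent. The only cosmetic difference is that the paper applies the concentration inequality to the raw sums $\sum_j \delta_j$ (increments bounded by $u$) and handles the gap $|\log(1+\delta_j)-\delta_j|\le u^2/(2(1-u))$ pathwise, whereas you center the logarithms (increment half-width $\tfrac12\log\tfrac{1+u}{1-u}\le u/(1-u)$) and bound the drift in expectation; both land within the stated $\tilde{\gamma}_n(\delta)$.
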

\begin{proof}
	Modify the proof of Theorem 4.6 in \cite{connolly2021stochastic} to let $Z_k = \sum_{j=n-k+2}^n\delta_j$ for $k=1:n$, and replace the use of the Azuma-Hoeffding inequality with its maximal version. 
\end{proof}

We now have the tools we need to put a probabilistic bound on the forward error. 
\begin{theorem}\label{thm:simpleBound}
	Let $s_n = \sum_{i=1}^nx_i$ and let $\hat{s}_n$ be computed by recursive summation, and assume that Model \ref{model:simple} holds. Then for any $\delta\in(0,1)$, with failure probability $\delta$, 
	\begin{equation}
		|E_n| \leq u\|{\bf s}_n\|\lambda(\delta/2)(1+\tilde{\gamma}_n(\delta/2)). 
	\end{equation}
\end{theorem}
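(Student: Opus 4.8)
The plan is to combine the exact expression for the forward error from Lemma~\ref{lemma:forwardError} with the two probabilistic tools already established: the bound on the products $\prod_{j=k+1}^n(1+\delta_j)$ from Lemma~\ref{lemma:deltaBound}, and a martingale concentration inequality (Lemma~\ref{lemma:Azuma}) applied to a weighted sum of the $\delta_k$. Starting from
\[
	E_n = \sum_{k=2}^n s_k\delta_k\prod_{j=k+1}^n(1+\delta_j),
\]
I would first isolate the troublesome higher-order behavior by writing $\prod_{j=k+1}^n(1+\delta_j) = 1 + p_k$, where $p_k = \prod_{j=k+1}^n(1+\delta_j) - 1$. This splits the error into a leading martingale-like term $\sum_{k=2}^n s_k\delta_k$ and a remainder $\sum_{k=2}^n s_k\delta_k p_k$. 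The point of the ``all orders'' refinement is that the factors $p_k$ are controlled uniformly by Lemma~\ref{lemma:deltaBound}, so the full product $1+p_k$ is bounded by $1+\tilde\gamma_n(\delta/2)$ on a high-probability event.

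Next I would define the partial sums $Z_m = \sum_{k=2}^m s_k\delta_k$ (with $Z_1 = 0$) and verify that $\{Z_m\}$ is a martingale with respect to the sequence $\delta_2,\ldots,\delta_n$: since under Model~\ref{model:simple} the $\delta_k$ are independent with mean zero and each $s_k$ is a deterministic function of the data (hence constant relative to the rounding errors), we have $\E(Z_{m}\mid \delta_2,\ldots,\delta_{m-1}) = Z_{m-1}$. The increments satisfy $|Z_m - Z_{m-1}| = |s_m\delta_m| \leq u|s_m|$, so I may take the half-width bound $c_m = u|s_m|$, giving $\|{\bf c}_n\|_2 = u\|{\bf s}_n\|$. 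Applying the Azuma-Hoeffding inequality (Lemma~\ref{lemma:Azuma}) with failure probability $\delta/2$ yields $|Z_n| \leq \lambda(\delta/2)\,u\|{\bf s}_n\|$ on an event of probability at least $1-\delta/2$.

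The final step is to assemble the two bounds and handle the failure probabilities. On the intersection of the Azuma event and the Lemma~\ref{lemma:deltaBound} event (each applied with parameter $\delta/2$), I would bound
\[
	|E_n| = \left|\sum_{k=2}^n s_k\delta_k(1+p_k)\right| \leq (1+\tilde\gamma_n(\delta/2))\left|\sum_{k=2}^n s_k\delta_k\right|,
\]
where pulling the uniform factor $1+p_k$ out requires a little care since the $p_k$ depend on $k$. The clean way is to observe that on the Lemma~\ref{lemma:deltaBound} event every factor $(1+p_k)$ lies in the interval $[1-\tilde\gamma_n(\delta/2), 1+\tilde\gamma_n(\delta/2)]$, so I can instead run the martingale argument directly on the weighted increments $s_k\delta_k(1+p_k)$ and absorb the uniform bound into the increment widths, or factor it out after bounding $|Z_n|$. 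Either route gives $|E_n|\leq u\|{\bf s}_n\|\lambda(\delta/2)(1+\tilde\gamma_n(\delta/2))$. A union bound over the two events of probability $\delta/2$ each yields total failure probability at most $\delta$.

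The main obstacle is the dependence issue hidden in the factoring step: the quantities $p_k$ are not constants but random variables depending on $\delta_{k+1},\ldots,\delta_n$, so the product $s_k\delta_k(1+p_k)$ is \emph{not} a clean martingale increment with respect to the natural filtration, and $Z_n = \sum_k s_k\delta_k(1+p_k)$ is not obviously a martingale sum. The delicate part is therefore arranging the argument so that the concentration inequality applies to a genuine martingale (the unweighted sum $\sum s_k\delta_k$) while the product factors are controlled only through the deterministic uniform bound on the high-probability event; conditioning and the ordering of the two events must be handled so that the two failure probabilities add correctly rather than compounding multiplicatively.
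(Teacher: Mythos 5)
Your decomposition $E_n=\sum_k s_k\delta_k+\sum_k s_k\delta_k p_k$ identifies the right difficulty but does not resolve it, and both of the escape routes you sketch fail. First, you cannot ``factor it out after bounding $|Z_n|$'': the inequality $\bigl|\sum_k s_k\delta_k(1+p_k)\bigr|\leq(1+\tilde\gamma_n)\bigl|\sum_k s_k\delta_k\bigr|$ is false when the $p_k$ vary with $k$, because the remainder $\sum_k s_k\delta_k p_k$ is controlled only by $\tilde\gamma_n u\sum_k|s_k|$, which destroys the cancellation that made $\bigl|\sum_k s_k\delta_k\bigr|$ small; this route yields at best $|E_n|\leq \lambda u\|{\bf s}_n\|_2+\tilde\gamma_n u\|{\bf s}_n\|_1$, a strictly weaker statement (no $\lambda$ on the second term, and a 1-norm in place of the 2-norm). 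Second, you cannot ``run the martingale argument directly on the weighted increments $s_k\delta_k(1+p_k)$'' with respect to the natural (forward) filtration, for exactly the reason you state in your last paragraph: $p_k$ depends on $\delta_{k+1},\ldots,\delta_n$, so it is not predictable and the weighted partial sums are not a martingale in that ordering. You have named the obstacle but left the proof without a valid concentration step for the full error.

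The missing idea in the paper's proof is to reverse the order of the filtration. Define
\[
Z_k=\sum_{j=n-k+1}^{n}s_j\delta_j\prod_{\ell=j+1}^{n}(1+\delta_\ell),\qquad k=1:n-1,
\]
so that $Z_{n-1}=E_n$ and $Z_1,\ldots,Z_{n-1}$ is a martingale with respect to $\delta_n,\delta_{n-1},\ldots,\delta_2$: each increment $Z_k-Z_{k-1}=s_{n-k+1}\delta_{n-k+1}\prod_{\ell=n-k+2}^n(1+\delta_\ell)$ has the product factor measurable with respect to the \emph{earlier} variables of the reversed sequence, and conditional mean zero by the full independence and zero mean of the $\delta_j$ under Model \ref{model:simple}. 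Lemma \ref{lemma:deltaBound} then bounds all increments simultaneously by $u|s_{n-k+1}|(1+\tilde\gamma_n(\delta/2))$ except on an event of probability $\delta/2$, and the generalization of the Azuma--Hoeffding inequality allowing the increment bounds to fail with probability $\eta=\delta/2$ gives the stated bound with total failure probability $\delta$. Note that this reversal is precisely why the argument requires full independence and does not survive under the mean-independence of Model \ref{model:stochastic}, as the paper's commentary in Section \ref{sec:simpleCommentary} points out; your forward-filtration setup obscures this distinction.
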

\begin{proof}
	For $k = 1:n-1$ define the random variables 
	\[Z_k = \sum_{j=n-k+1}^ns_j\delta_j\prod_{\ell=j+1}^n(1+\delta_\ell).\]
	Then $Z_{n-1} = E_n$ by Lemma \ref{lemma:forwardError}, and $Z_1,\ldots,Z_{n-1}$ is a martingale with respect to the random variables $\delta_n,\ldots,\delta_2$. Furthermore, by Lemma \ref{lemma:deltaBound} the bounds
	\begin{equation}\label{eqn:gammTildeBounds}
		|Z_k - Z_{k-1}| = \left|s_{n-k+1}\delta_{n-k+1}\prod_{\ell=n-k+2}^n(1+\delta_\ell)\right| \leq u|s_{n-k+1}|(1+\tilde{\gamma}_n(\delta/2))
	\end{equation}
	all hold simultaneously with failure probability at most $\delta/2$. The claim follows applying the Azuma-Hoeffding inequality with failure probability $\delta/2$. 
\end{proof}

\subsection{Commentary} \label{sec:simpleCommentary}
We make a few observations about Theorem \ref{thm:simpleBound}. First, the theorem will hold for stochastic rounding with the substitution $u \leftarrow 2u$. Second, the term $1+\tilde{\gamma}_n(\delta/2)$ remains close to 1 as long as $\lambda(\delta/2)\sqrt{n}u < 1$, but for larger problem sizes it increases exponentially. One interpretation of this observation is that if a sufficient majority of rounding operations are away from zero (i.e., $\delta_k > 0$), the error in the computed sum will grow rapidly. 

% Second, due to the unorthodox ordering of the variables for the martingale used in the proof, the bound in the theorem will still require the substitution $u\leftarrow 2u$ to apply to stochastic rounding. Tigher bounds $a_k\leq \delta_k\leq b_k$ are knowable given the {\it previous} terms $\delta_1,\ldots,\delta_{k-1}$, but not if only the subsequent perturbations $\delta_{k+1},\ldots,\delta_n$ are available. 

Finally, we emphasize that the proof of the theorem relies on the assumption that the perturbations are independent. Under the broader Model \ref{model:stochastic} which assumes only mean-independence of the perturbations, the sequence $Z_1,\ldots,Z_{n-1}$ would not necessarily be a martingale. As of the time of writing, the author has been unable to find a modification to the proof that would accommodate Model \ref{model:stochastic}. A different approach is therefore in order. 

\section{Second structural bound}
In order to obtain a probabilistic bound that holds under Model \ref{model:stochastic}, we introduce a new expression for the forward error. Consider the expression for $E_n$ in \eqref{eqn:forwardError} as a multivariate polynomial with respect to the variables $\delta_2,\ldots,\delta_n$, and rewrite it as
\begin{equation}\label{eqn:forwardErrorBinomial}
	E_n = \sum_{j=1}^{n-1}S_n^{(j)}, 
\end{equation}
where each $S_n^{(j)}$ is the sum of all terms of order $j$. From the proof of Lemma \ref{lemma:forwardError} we observe that the forward error satisfies the recurrence $E_k = E_{k-1}(1+\delta_k) + \delta_ks_k$ for $k=2:n$. By equating terms of order $j$ on both sides, we get
\[
	S_k^{(j)} = \begin{cases}
		0 & k = 1, \\
		S_{k-1}^{(j)} + \delta_ks_k & k > 1,\ j = 1 \\
		S_{k-1}^{(j)} + \delta_kS_{k-1}^{(j-1)} & k > 1,\ j > 1. 
	\end{cases}
\]
Note that $S_k^{(j)}=0$ whenever $j\geq k$ since the error in the sum of $k$ numbers contains no terms of order greater than $k-1$. 

By unraveling the above recurrence, we obtain an identity analogous to the hockey-stick identity for binomial coefficients. 
\[
	S_k^{(j)} = \begin{cases}
		\sum_{i = 2}^k \delta_is_i & j = 1, \\
		\sum_{i=2}^k \delta_iS_{i-1}^{(j-1)} & j > 1.\\
	\end{cases}
\]
The key idea in the proof of Theorem \ref{thm:mainStructural} is that for each $j=1:n-1$, the sequence of random variables $S_1^{(j)},\ldots,S_n^{(j)}$ is a martingale with respect to $\delta_1,\ldots,\delta_n$ under Model \ref{model:stochastic} because each term $S_{i-1}^{(j-1)}$ is a function of $\delta_1,\ldots,\delta_{i-1}$. Given a bound on the terms of order $j-1$, we can therefore use the Azuma-Hoeffding inequality to bound the terms of order $j$. By repeating this process for each $j=1:n-1$ and using the identity \eqref{eqn:forwardErrorBinomial}, we obtain the following bound on the forward error.

\begin{theorem}\label{thm:mainStructural}
	Let $s_n = \sum_{i=1}^nx_i$ and let $\hat{s}_n$ be computed by recursive summation, and assume that Model \ref{model:stochastic} holds. Then for any $\delta\in(0,1)$, with failure probability $\delta$, 
	\[|E_n| \leq \left(\frac{1-\kappa^{n-1}}{1-\kappa}\right)
	\lambda \|{\bf s}_n\|_2u. 
	\]
	where $\lambda :=  \lambda(\delta/(n-1)) = \sqrt{2\log(2(n-1)/\delta)}$ and $\kappa := \kappa(n,\delta) = \lambda\sqrt{n}u$.
\end{theorem}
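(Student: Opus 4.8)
The plan is to bound each homogeneous component $S_n^{(j)}$ separately and then recombine them through the triangle inequality $|E_n| \leq \sum_{j=1}^{n-1}|S_n^{(j)}|$ together with the geometric sum $\sum_{j=1}^{n-1}\kappa^{j-1} = (1-\kappa^{n-1})/(1-\kappa)$. The heart of the argument is an induction on the order $j$ that produces, with the failure budget split evenly across the $n-1$ orders, the maximal bounds
\[ \max_{1\leq k \leq n} |S_k^{(j)}| \leq M_j := \kappa^{j-1}\lambda u \|{\bf s}_n\|_2, \qquad j = 1,\ldots,n-1, \]
and I will write $\mathcal{E}_j$ for the event that the $j$-th of these bounds holds. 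Since each sequence $S_1^{(j)},\ldots,S_n^{(j)}$ is a martingale with respect to $\delta_1,\ldots,\delta_n$, I would apply the maximal version of the Azuma--Hoeffding inequality (Lemma \ref{lemma:Azuma}) at every order with nominal failure probability $\delta/(n-1)$, which is exactly why $\lambda = \lambda(\delta/(n-1))$ appears. The maximal version is essential here: bootstrapping into order $j$ requires a bound not just on $S_n^{(j-1)}$ but on every partial sum $S_{k-1}^{(j-1)}$.

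For the base case $j=1$ I would use $S_k^{(1)}-S_{k-1}^{(1)} = \delta_k s_k$. Because $a_k \leq \delta_k \leq b_k$ with $b_k - a_k \leq 2u$ and the $s_k$ are fixed data under Model \ref{model:stochastic}, the increment lies in an interval of width at most $2u|s_k|$, so I may take the deterministic constants $c_k = u|s_k|$; then $\|{\bf c}_n\|_2 = u\|{\bf s}_n\|_2$ and Azuma--Hoeffding yields $\max_k|S_k^{(1)}| \leq \lambda u \|{\bf s}_n\|_2 = M_1$. For the inductive step $j \geq 2$ I would use $S_k^{(j)}-S_{k-1}^{(j)} = \delta_k S_{k-1}^{(j-1)}$, whose half-width $u|S_{k-1}^{(j-1)}|$ is now random. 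I would therefore take the constants $c_k = uM_{j-1}$, so that the required increment bound $|S_{k-1}^{(j-1)}| \leq M_{j-1}$ holds for all $k$ precisely on the event $\mathcal{E}_{j-1}$. Invoking the variant of Azuma--Hoeffding that tolerates randomly failing increment bounds, together with $\sqrt{n-1}\leq \sqrt{n}$, gives $\max_k|S_k^{(j)}| \leq \lambda u M_{j-1}\sqrt{n-1} \leq \kappa M_{j-1} = M_j$, completing the induction.

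The step I expect to be delicate is the accounting of failure probability, where a naive union bound over the nested orders would give a useless total of order $n\delta$. The resolution is to isolate, for each $j$, the single ``pure'' Azuma event $B_j$ (failure of the concentration bound at order $j$ for the appropriate constants $c_k$), which satisfies $\Pr(B_j)\leq \delta/(n-1)$ independently of the data. The inductive structure shows that $\mathcal{E}_j$ can fail only if some $B_i$ with $i\leq j$ occurs, so $\mathcal{E}_j^c \subseteq \bigcup_{i\leq j}B_i$; consequently the event that \emph{any} of the $n-1$ bounds fails is contained in $\bigcup_{i=1}^{n-1}B_i$, whose probability is at most $(n-1)\cdot \delta/(n-1) = \delta$. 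Because the \emph{same} $n-1$ concentration failures are shared across every level of the chain, there is no multiplicative blow-up, and on the complementary event of probability at least $1-\delta$ the triangle inequality and the geometric sum deliver the stated bound.
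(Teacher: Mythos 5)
Your proposal is correct and follows essentially the same route as the paper's own proof: the same order-by-order decomposition $E_n=\sum_j S_n^{(j)}$, the same martingale structure with the maximal Azuma--Hoeffding inequality applied at each order with budget $\delta/(n-1)$, the same bootstrapping of the order-$(j-1)$ bound into the increment constants at order $j$, and the same final union bound and geometric sum. Your explicit bookkeeping with the events $B_j$ and $\mathcal{E}_j$ is simply a more detailed writeup of the probability accounting that the paper states informally.
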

\begin{proof}
	Begin with the first-order error term $S_n^{(1)} = \sum_{i=2}^n\delta_is_i$. By applying the maximal Azuma-Hoeffding inequality, it follows that with failure probability at most $\delta/(n-1)$, 
	\begin{equation}\label{eqn:firstOrder}
		\max_{1\leq k\leq n} |S_k^{(1)}| \leq u\|{\bf s}_n\|_2\sqrt{2\log(2(n-1)/\delta)} =: \beta. 
	\end{equation}
	Continue to the second-order term $S_n^{(2)} = \sum_{i=2}^n\delta_iS_{i-1}^{(1)}$. The bound \eqref{eqn:firstOrder} holds with failure probability at most $\delta/(n-1)$, so by applying the maximal Azuma-Hoeffding inequality with failure probability $\delta/(n-1)$ it follows that with failure probability at most $2\delta/(n-1)$, 
	\begin{align}\label{eqn:secondOrder}
		\begin{split}
		\max_{1\leq k \leq n} |S_k^{(2)}| &\leq u \left(\sum_{i=2}^n\beta^2\right)^{1/2}\sqrt{2\log(2(n-1)/\delta)}\\
		&\leq u\beta\sqrt{n}\sqrt{2\log(2(n-1)/\delta)} \\
		&= \kappa \beta. 
		\end{split}
	\end{align}
	More than that, it follows that with failure probability at most $2\delta/(n-1)$ the bounds \eqref{eqn:firstOrder} and \eqref{eqn:secondOrder} hold simultaneously. 
	
	Continuing in this manner for $j=3,\ldots,n-1$, we conclude that with failure probability at most $\delta$ the bound
	\[
		\max_{1\leq k\leq n} |S_k^{(j)}| \leq \kappa^{j-1}\beta
	\]
	holds for all $1\leq j \leq n-1$ simultaneously. The claim follows by using the triangle inequality on the expression for the forward error in \eqref{eqn:forwardErrorBinomial}. 
\end{proof}

\section{Probabilistic bound for random data}
Following the lead of Higham and Mary \cite{higham2020sharper}, we derive an error bound that does not directly depend on the partial sums $s_k$ by making some assumptions about the distribution of the data $x_i$. These assumptions, and the modified model for rounding errors, are given in Models \ref{model:data} and \ref{model:refined}. In short, the data are assumed to be independent random variables satisfying $|x_i-\mu_x|\leq C_x$ for constants $\mu_x$ and $C_x$, and the perturbations $\delta_k$ are assumed to be mean-independent of the previous perturbations $\delta_1,\ldots,\delta_{k-1}$ as well as the data $x_1,\ldots,x_n$.

By applying the maximal Azuma-Hoeffding inequality to the variables $Z_1=0$, $Z_{k+1} = s_k - k\mu_x$ for $k = 1:n$, we find that with failure probability at most $\delta/n$, 
\[
	\max_{1\leq k\leq n} |s_k - k\mu_x| \leq \sqrt{n}C_x\sqrt{2\log(2n/\delta)},
\]
and therefore that with failure probability at most $\delta/n$, 
\[
	\max_{1\leq k\leq n} |s_k| \leq k|\mu_x| + \sqrt{n}C_x\sqrt{2\log(2n/\delta)},
\]
which implies that with failure probability at most $\delta/n$, 
\begin{equation}\label{eqn:sbound}
	\|{\bf s}_n\|_2 \leq n^{3/2}|\mu_x| + nC_x\sqrt{2\log(2n/\delta)}. 
\end{equation}
By inserting this probabilistic bound into Theorem \ref{thm:mainStructural} applied with failure probability $(n-1)\delta/n$, we arrive at the following {\it a priori} probabilistic bound. 
\begin{theorem}\label{thm:mainGeneral}
	Let $s_n = \sum_{i=1}^nx_i$ and let $\hat{s}_n$ be computed by recursive summation, and assume that the data and perturbations satisfy Model \ref{model:refined}. Then for any $\delta\in(0,1)$, with failure probability at most $\delta$, 
	\[|E_n| \leq \left(\frac{1-\kappa^{n-1}}{1-\kappa}\right)\left(\lambda|\mu_x|n^{3/2} + \lambda^2C_xn\right)
	u, 
	\]
	where $\lambda := \lambda(\delta/n) = \sqrt{2\log(2n/\delta)}$ and 		$\kappa := \kappa(n,\delta) = \lambda\sqrt{n}u$. 
\end{theorem}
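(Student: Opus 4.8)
The plan is to combine the structural bound of Theorem \ref{thm:mainStructural} with the \emph{a priori} bound on $\|{\bf s}_n\|_2$ that precedes the statement, being careful to track the failure probabilities so that the total does not exceed $\delta$. The essential observation is that Theorem \ref{thm:mainGeneral} has two independent sources of randomness to control: the rounding errors $\delta_k$ (which govern the structural factor $(1-\kappa^{n-1})/(1-\kappa)$) and the data $x_i$ (which govern the size of the partial sums). I would split the budget $\delta$ between these two events, allocating $\delta/n$ to the data event and $(n-1)\delta/n$ to the rounding event, so that by a union bound the combined failure probability is at most $\delta$.

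First I would establish the data bound \eqref{eqn:sbound}, which holds with failure probability at most $\delta/n$. The derivation uses the maximal Azuma-Hoeffding inequality applied to the martingale $Z_{k+1} = s_k - k\mu_x$ with respect to the data $x_1,\ldots,x_n$: since the increments $Z_{k+1}-Z_k = x_k - \mu_x$ are bounded in magnitude by $C_x$ under Model \ref{model:data}, the inequality with $c_k = C_x$ yields $\max_k |s_k - k\mu_x| \leq \sqrt{n}C_x\lambda(\delta/n)$. Applying the triangle inequality term-by-term gives $|s_k| \leq k|\mu_x| + \sqrt{n}C_x\lambda(\delta/n)$, and summing the squares over $k=2:n$ and taking square roots produces the stated bound $\|{\bf s}_n\|_2 \leq n^{3/2}|\mu_x| + nC_x\lambda(\delta/n)$. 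Here I would use the crude estimates $\left(\sum_{k=2}^n k^2\right)^{1/2} \leq n^{3/2}$ and $\left(\sum_{k=2}^n 1\right)^{1/2} \leq \sqrt{n}$ to absorb constants cleanly.

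Next I would apply Theorem \ref{thm:mainStructural} with failure probability $(n-1)\delta/n$ in place of $\delta$. Under Model \ref{model:refined}, the mean-independence of the $\delta_k$ from both the previous perturbations and the data guarantees that the martingale structure exploited in the proof of Theorem \ref{thm:mainStructural} remains valid even after conditioning on the data, so that theorem applies verbatim with the substituted failure probability. This gives $|E_n| \leq \left((1-\kappa^{n-1})/(1-\kappa)\right)\lambda\|{\bf s}_n\|_2 u$ with $\lambda = \lambda(\delta/n)$ and $\kappa = \lambda\sqrt{n}u$, where I note that $\lambda(((n-1)\delta/n)/(n-1)) = \lambda(\delta/n)$ so the two occurrences of $\lambda$ coincide and no mismatch arises. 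Substituting the probabilistic bound on $\|{\bf s}_n\|_2$ then yields $\lambda\|{\bf s}_n\|_2 \leq \lambda|\mu_x|n^{3/2} + \lambda^2 C_x n$, which is exactly the bracketed expression in the conclusion.

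The main obstacle is bookkeeping rather than analysis: one must verify that the failure probabilities compose correctly and that the two applications of Azuma-Hoeffding use compatible values of $\lambda$. The subtle point is the choice to invoke Theorem \ref{thm:mainStructural} at level $(n-1)\delta/n$, which is chosen precisely so that its internal failure budget of $\delta/(n-1)$ per order becomes $\delta/n$, matching the data event's level and making $\lambda(\delta/n)$ the single parameter throughout. A final union bound over the data event (probability $\delta/n$) and the rounding event (probability $(n-1)\delta/n$) then certifies that both bounds hold simultaneously with failure probability at most $\delta$, completing the proof.
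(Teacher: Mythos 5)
Your proposal is correct and follows the paper's own argument essentially verbatim: the bound \eqref{eqn:sbound} on $\|{\bf s}_n\|_2$ via the maximal Azuma--Hoeffding inequality at level $\delta/n$, Theorem \ref{thm:mainStructural} invoked at level $(n-1)\delta/n$ so that both occurrences of $\lambda$ equal $\lambda(\delta/n)$, and a union bound totaling $\delta$. Your observation that Model \ref{model:refined}'s mean-independence from the data is what keeps the martingale argument valid is exactly the role that model plays in the paper.
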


This theorem closely resembles Theorem 2.8 of \cite{higham2020sharper}, but differs notably in that it holds to all orders where the earlier theorem holds only to first order. A more minor difference is that the term $C_x$ bounds $|x_i - \mu_x|$ where in the earlier theorem it bounded $|x_i|$, so for the same set of data Theorem \ref{thm:mainGeneral} may admit a smaller constant $C_x$. 

In a similar manner, we can combine \eqref{eqn:sbound} with Theorem \ref{thm:simpleBound} to derive a tighter error bound under the assumption that the $\delta_k$ are independent of each other and the data. By taking \eqref{eqn:sbound} with failure probability $\delta/3$ and Theorem \ref{thm:simpleBound} with failure probability $2\delta/3$, we get the following result. Note that as with Theorem \ref{thm:simpleBound}, it requires the substitution $u\leftarrow 2u$ to apply to stochastic rounding. 
\begin{theorem}\label{thm:simpleGeneral}
		Let $s_n = \sum_{i=1}^nx_i$ and let $\hat{s}_n$ be computed by recursive summation. Assume that the data satisfies Model \ref{model:data} and that the perturbations satisfy Model \ref{model:simple} and are furthermore independent of the data. Then for any $\delta\in(0,1)$, with failure probability at most $\delta$, 
	\[|E_n| \leq (1+\tilde{\gamma}_n(\delta/3))\left(\lambda|\mu_x|n^{3/2} + \lambda^2C_xn\right)
	u, 
	\]
	where $\lambda := \lambda(\delta/3) = \sqrt{2\log(6/\delta)}$. 
\end{theorem}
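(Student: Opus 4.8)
The plan is to mirror the derivation of Theorem \ref{thm:mainGeneral}: combine the data bound \eqref{eqn:sbound} with the structural forward-error bound of Theorem \ref{thm:simpleBound} through a union bound, splitting the total failure probability $\delta$ as $\delta/3$ for the data event and $2\delta/3$ for the rounding-error event. Both ingredients are already in hand, so the work is mostly in tuning the failure probabilities so that both contributions report the same $\lambda(\delta/3)$ and then chaining the inequalities.

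First I would invoke \eqref{eqn:sbound}, which bounds $\|{\bf s}_n\|_2$ using only Model \ref{model:data}. That estimate was stated with failure probability $\delta/n$ and logarithmic factor $\sqrt{2\log(2n/\delta)}$; to obtain failure probability $\delta/3$ instead I would apply it with its internal parameter set to $n\delta/3$, so that $2n/(n\delta/3) = 6/\delta$ and the factor becomes exactly $\lambda(\delta/3) = \sqrt{2\log(6/\delta)}$. This yields, with failure probability at most $\delta/3$,
\[
\|{\bf s}_n\|_2 \leq n^{3/2}|\mu_x| + nC_x\,\lambda(\delta/3).
\]

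Next I would apply Theorem \ref{thm:simpleBound} with failure probability $2\delta/3$, which gives $|E_n| \leq u\|{\bf s}_n\|_2\,\lambda(\delta/3)\,(1+\tilde{\gamma}_n(\delta/3))$, since $\lambda((2\delta/3)/2) = \lambda(\delta/3)$ and likewise for $\tilde{\gamma}_n$. The point that requires care here---and the reason the theorem explicitly assumes the perturbations are independent of the data---is that $\|{\bf s}_n\|_2$ is now itself random, so Theorem \ref{thm:simpleBound} cannot immediately be treated as an inequality with fixed coefficients. I would handle this by conditioning on the data: for each realization of $x_1,\ldots,x_n$ the partial sums $s_k$ are fixed, the martingale in $\delta_n,\ldots,\delta_2$ underlying the proof of Theorem \ref{thm:simpleBound} is unaffected (because the $\delta_k$ are independent of the data, their conditional law is unchanged and the Azuma constants $u|s_{n-k+1}|(1+\tilde{\gamma}_n(\delta/3))$ remain valid), and the conditional failure probability is at most $2\delta/3$. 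Taking expectation over the data via the tower property then shows the rounding-error event fails with unconditional probability at most $2\delta/3$.

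Finally I would take a union bound over the data event and the rounding-error event, which together hold with failure probability at most $\delta/3 + 2\delta/3 = \delta$. On their intersection both displayed inequalities are valid, so substituting the bound on $\|{\bf s}_n\|_2$ into the bound on $|E_n|$ and expanding the product $\lambda(\delta/3)\bigl(n^{3/2}|\mu_x| + nC_x\lambda(\delta/3)\bigr)$ yields the claimed estimate with $\lambda = \lambda(\delta/3)$. I expect the only genuine obstacle to be the conditioning argument of the previous paragraph; the remaining steps are bookkeeping of failure probabilities and elementary algebra.
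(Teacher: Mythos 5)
Your proposal is correct and follows exactly the route the paper takes: it combines \eqref{eqn:sbound} at failure probability $\delta/3$ with Theorem \ref{thm:simpleBound} at failure probability $2\delta/3$ and applies a union bound, which is precisely the one-sentence derivation the paper gives before stating the theorem. Your additional conditioning-on-the-data argument (justified by the assumed independence of the perturbations from the data) is a valid and welcome elaboration of a step the paper leaves implicit, not a departure from its approach.
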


%----------%----------%----------
%----------%----------%----------
%----------%----------%----------

\section{Numerical experiments} 
Here we present several numerical experiments to test our probabilistic bounds. The experiments are done in Matlab R2021a, and half precision (fp16) and bfloat16 are simulated using the Matlab function \texttt{chop}\footnote{\url{https://github.com/higham/chop}} as implemented by Higham and Pranesh in \cite{higham2019simulating}. The \texttt{chop} function can simulate both deterministic and stochastic rounding. 

\subsection{Error bounds}
In the first set of experiments, we examine how well the bounds of Theorems \ref{thm:mainGeneral} and \ref{thm:simpleGeneral} describe the error in practice. For random uniform $[-1,1]$ data, we compute $s_n = \sum_{i=1}^nx_i$ using recursive summation in double precision, half precision, and the bfloat16 format. The forward errors are then computed using the double precision computation as the exact answer. 

The results of 50 trials are shown for fp16 in Figure \ref{fig:half} and for bfloat16 in Figure \ref{fig:bfloat}. The probabilistic error bounds of Theorems \ref{thm:mainGeneral} (assuming mean-independence of the roundoffs) and \ref{thm:simpleGeneral} (assuming full independence) are shown for failure probability $\delta=0.05$. As was the case for the error bounds of \cite{higham2020sharper}, our bounds are pessimistic and appear to hold in practice with $\lambda \approx 1$. The errors under stochastic rounding are not notably different from those under deterministic rounding even though the error bound of Theorem \ref{thm:simpleGeneral} is more pessimistic for stochastic rounding due to the substitution $u \leftarrow 2u$. If anything, the worst-case errors for stochastic rounding appear run slightly smaller than their deterministic counterparts. 

Because the new bounds hold to all orders, however, they establish that the forward error can be expected to grown in a controlled manner at least until $\lambda\sqrt{n}u \approx 1$. Taking $\lambda \approx 9$, this bound corresponds to the problem sizes $n\approx 3.5\times 10^{12}$ for single precision, $n\approx 5.2\times 10^{4}$ for fp16, and $n\approx 810$ for bfloat16. The precise behavior of the error beyond this point is something of a moot question: Higham and Mary note in \cite{higham2020sharper} that the relative forward error of the computed sum will typically grow proportionally to $\sqrt{n}u$. Thus if $\sqrt{n}u > 1$ the computed sum may not have any relative accuracy at all.

\begin{figure}		\label{fig:half}
    \centering
    \begin{minipage}{0.49\textwidth}
        \centering
        \includegraphics[trim={1cm .2cm 1.3cm .2cm},clip,width=\textwidth]{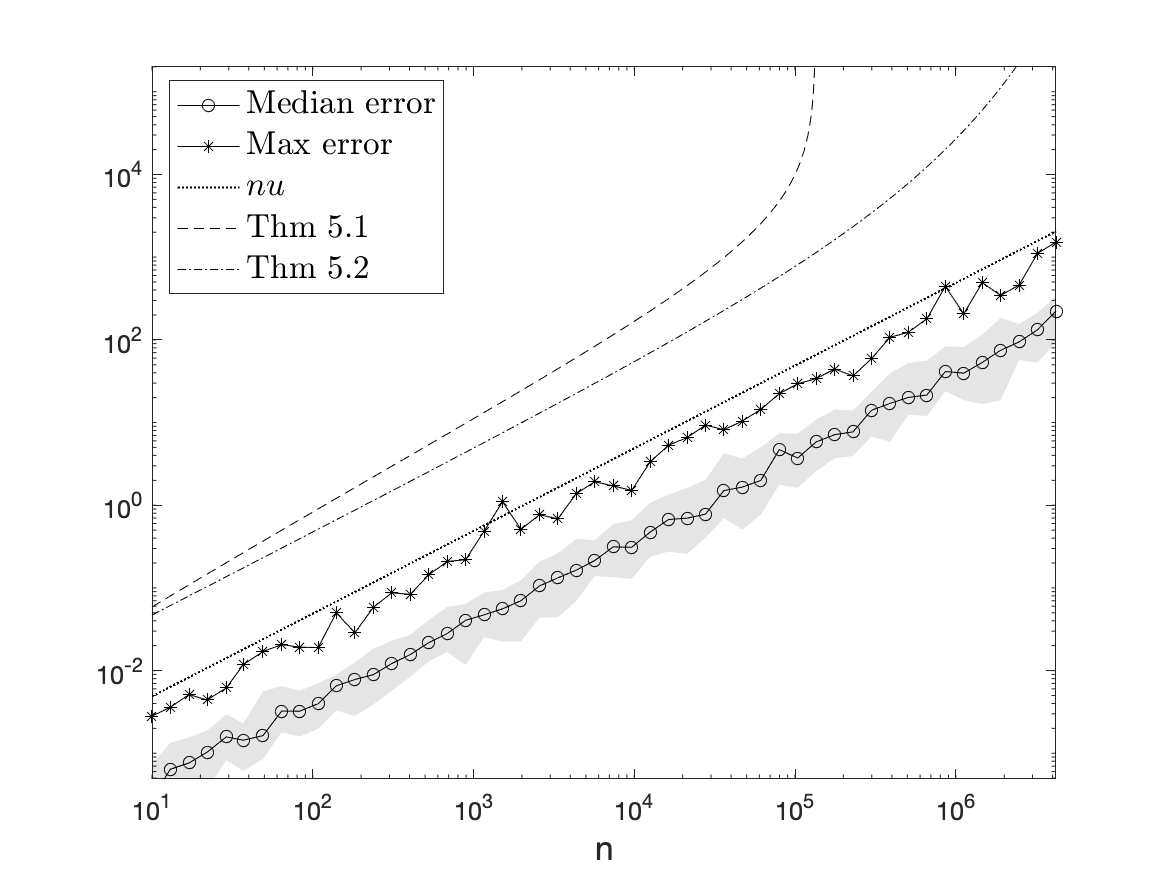}
    \end{minipage}\hspace{0.1cm}
    \begin{minipage}{0.49\textwidth}
        \centering
        \includegraphics[trim={1cm .2cm 1.3cm .2cm},clip,width=\textwidth]{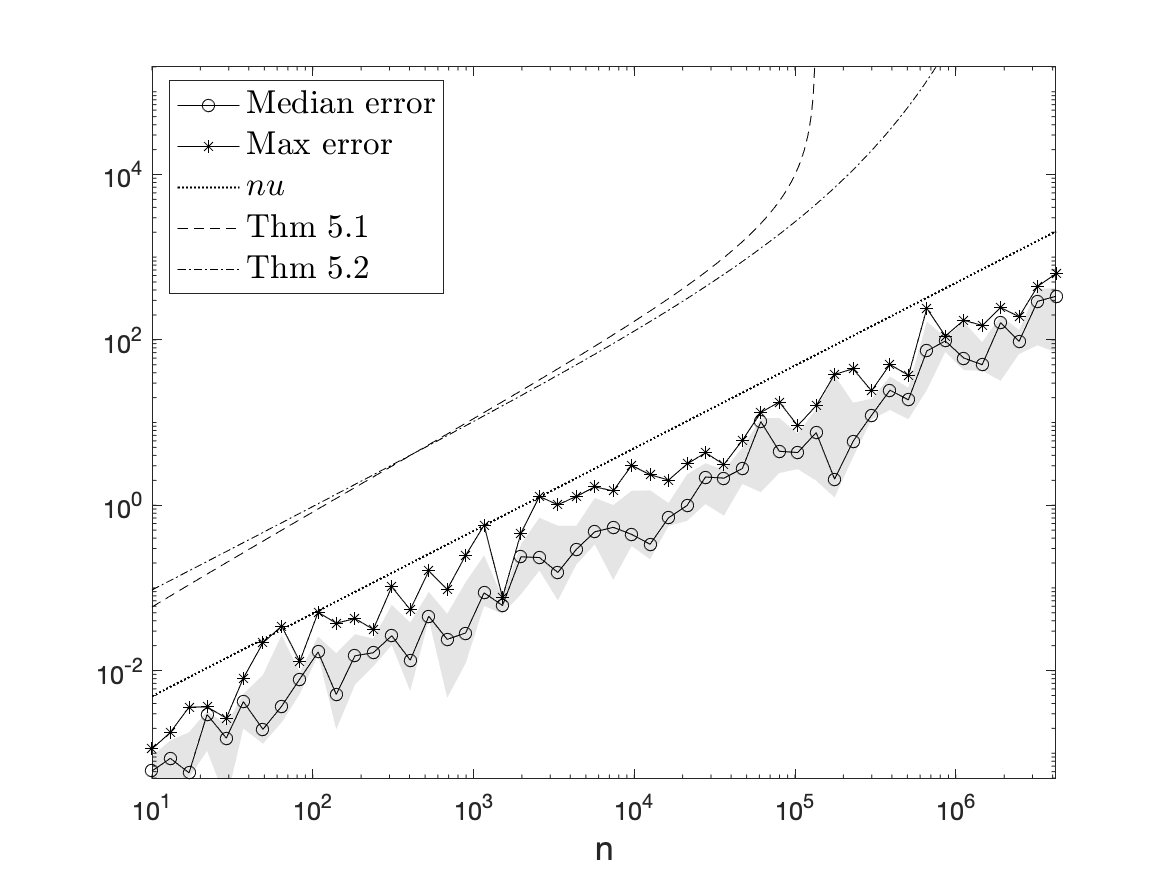}
    \end{minipage}
	\caption{Absolute forward error bounds for the half precision computation of $\sum_{i=1}^nx_i$ with random uniform $[-1,1]$ data. The shaded regions enclose the 25th and 75th percentile errors over 50 trials with $\delta = 0.05$. Left: Deterministic rounding. Right: Stochastic rounding. }
\end{figure}

\begin{figure}		\label{fig:bfloat}
    \centering
    \begin{minipage}{0.49\textwidth}
        \centering
        \includegraphics[trim={1cm .2cm 1.3cm .2cm},clip,width=\textwidth]{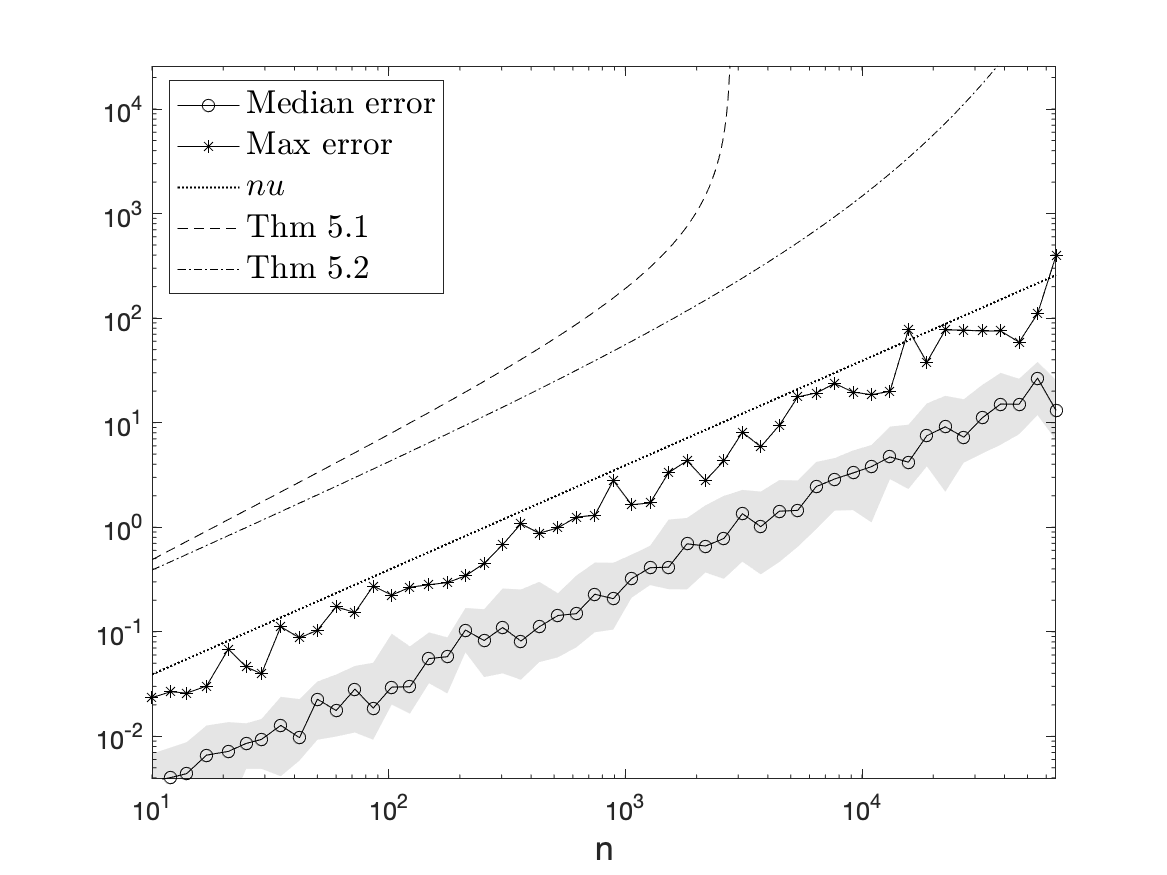}
    \end{minipage}\hspace{0.1cm}
    \begin{minipage}{0.49\textwidth}
        \centering
        \includegraphics[trim={1cm .2cm 1.3cm .2cm},clip,width=\textwidth]{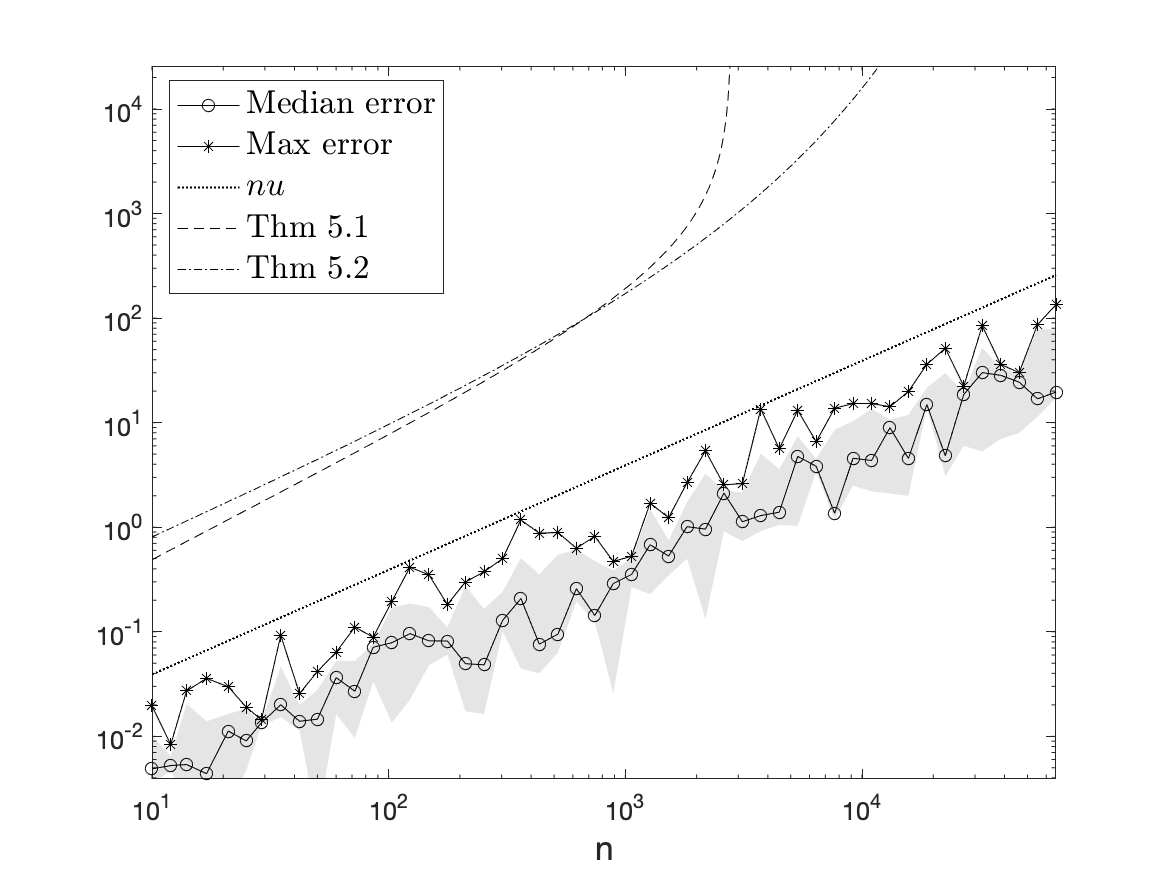}
    \end{minipage}
	\caption{Absolute forward error bounds for the bfloat16 computation of $\sum_{i=1}^nx_i$ with random uniform $[-1,1]$ data. The shaded regions enclose the 25th and 75th percentile errors over 50 trials with $\delta = 0.05$. Left: Deterministic rounding. Right: Stochastic rounding. }
\end{figure}

\subsection{Cumulative rounding errors} 
The previous experiments test problem sizes as large as as $n= u^{-2}$, or equivalently to the point where $\sqrt{n}u=1$. It is interesting that at least up to this point the forward errors do not begin to grow rapidly in the manner that the error bounds of Theorems \ref{thm:mainGeneral} and \ref{thm:simpleGeneral} would predict. In Section \ref{sec:simpleCommentary}, we note that the errors might be expected to grow rapidly if a sufficient majority of the rounding operations are away from zero, in which case the product $\prod_{i=1}^n(1+\delta_i)$ would grow exponentially. 

In the second set of experiments, we examine how the product $\prod_{i=1}^n(1+\delta_i)$ grows in practice and compare it to the bounds of Lemma \ref{lemma:deltaBound}. For random uniform $[-1,1]$ data, we compute $s_n = \sum_{i=1}^nx_i$ using recursive summation in blfoat16. At each index $i$, we record the size of the perturbation $\delta_i$ by comparing $\hat{s}_{i} = \flopt(\hat{s}_{i-1} + x_i)$ to the value where $\hat{s}_{i-1}$ is computed using bfloat16 but the addition $\hat{s}_{i-1}+x_i$ is done in single precision. The problem size $n$ ranges from $10$ to $3\times 10^5\approx 4.5u^{-2}$, and 50 trials are run for each value of $n$. 

Results are shown in Figure \ref{fig:deltaErr}, with a vertical dashed line representing the point where $\sqrt{n}u =1$. To the right of this line, the relative error will typically be close to 1 and so the precise behavior of the error is of less practical importance. As with the previous experiments, the bounds of Lemma \ref{lemma:deltaBound} are pessimistic and appear to hold in practice with $\lambda \approx 1$. Unlike the previous experiments, the errors behave quite differently depending on whether deterministic or stochastic rounding is used. The discrepancy is likely due to stagnation: once the computed partial sums $\hat{s}_i$ exceed $u^{-1}$ in magnitude, the deterministic rounding errors become biased toward zero. When using stochastic rounding the errors remain unbiased and so exponential growth in the product $\prod_{i=1}^n(1+\delta_i)$ is still possible, but the median product starts to decline because rounding toward zero becomes more likely than rounding away from zero. 

In summary, around the point when $\sqrt{n}u\approx 1$ we observe the following phenomena: 
\begin{itemize}
	\item The probabilistic bounds of this article grow rapidly and fail to capture the actual behavior of the errors. 
	\item Due to stagnation, the forward error for deterministic rounding will not grow exponentially. Exponential growth in the error may still occur for stochastic rounding, but if so it happens significantly later than our bounds would predict. 
	\item The relative forward error in the computed sum approaches 1 for a typical trial, for data with either zero or non-zero mean. It is therefore not recommended to compute sums of size $n\approx u^{-2}$ or larger using recursive summation. 
\end{itemize}

\begin{figure}		\label{fig:deltaErr}
    \centering
    \begin{minipage}{0.49\textwidth}
        \centering
        \includegraphics[trim={1cm .2cm 1.3cm .2cm},clip,width=\textwidth]{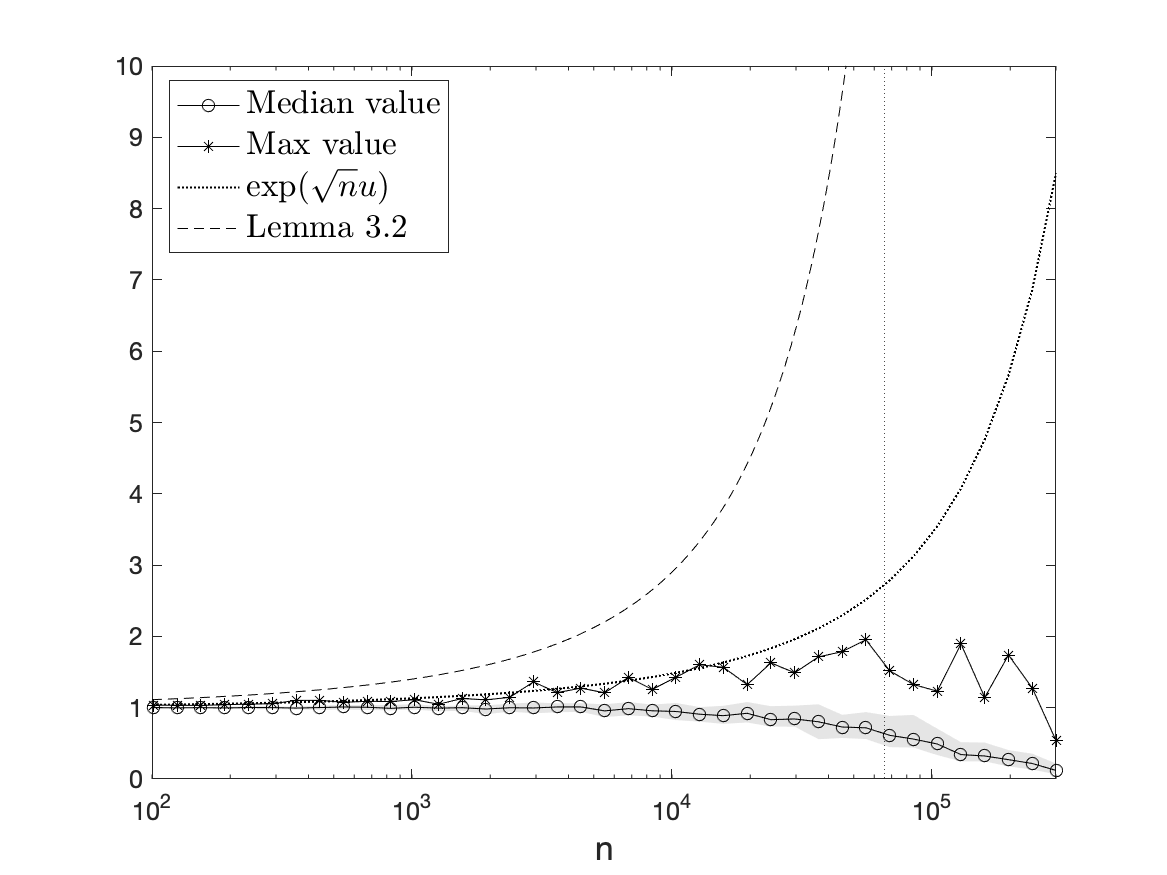}
    \end{minipage}\hspace{0.1cm}
    \begin{minipage}{0.49\textwidth}
        \centering
        \includegraphics[trim={1cm .2cm 1.3cm .2cm},clip,width=\textwidth]{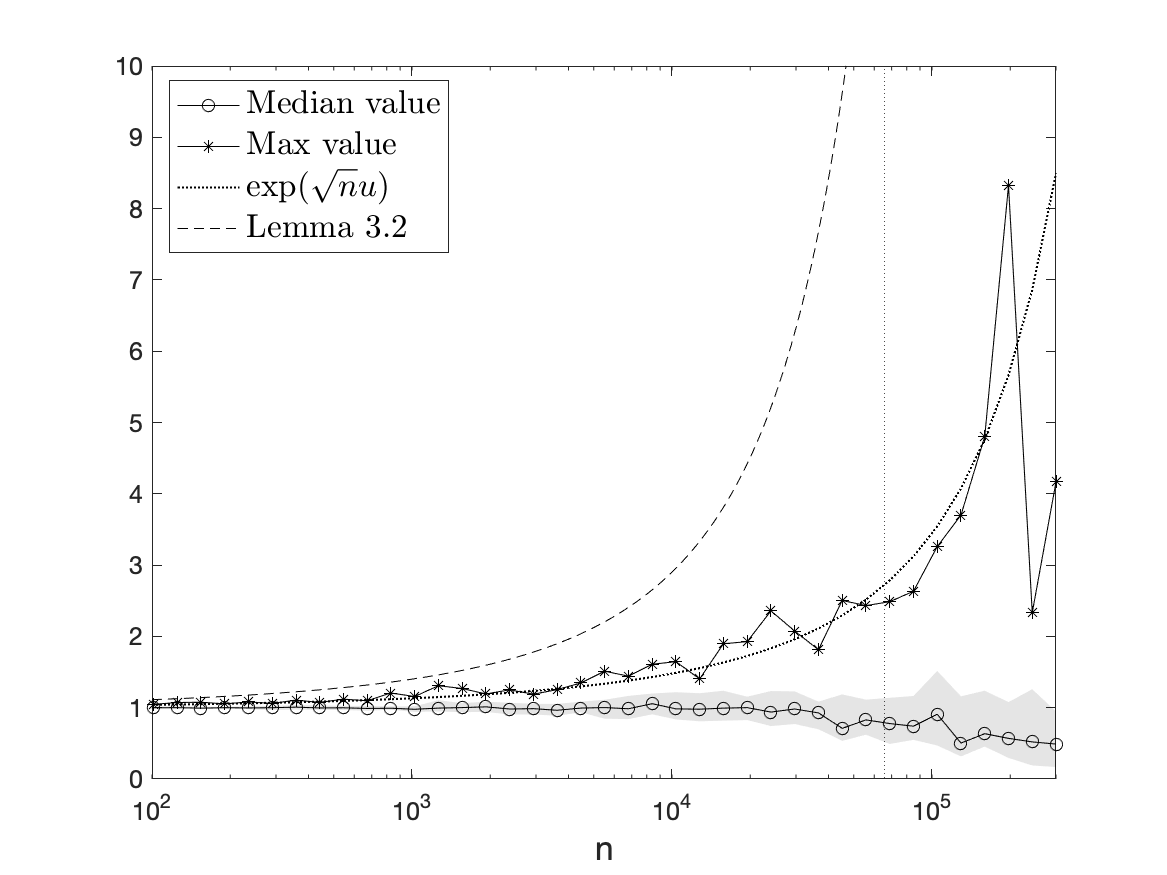}
    \end{minipage}
	\caption{Products $\prod_{i=1}^n(1+\delta_i)$ from the bfloat16 computation of $\sum_{i=1}^nx_i$ with random uniform $[-1,1]$ data. The shaded regions enclose the 25th and 75th percentile errors over 50 trials with $\delta = 0.05$. Left: Deterministic rounding. Right: Stochastic rounding. }
\end{figure}

\section{Conclusions}
We have introduced two probabilistic bounds on the forward error of a sum computed using recursive summation. Our bounds are made explicit to all orders and hold up to problem sizes $n$ where $\lambda \sqrt{n}u\approx 1$, where $u$ is the unit roundoff and $\lambda$ a small constant factor. Although the bounds appear to be pessimistic in practice by about an order of magnitude, they accurately describe the growth rate of the error for problem sizes of practical interest. The model of roundoff error used for one of these bounds accurately describes the behavior of at least one type of stochastic rounding, and so the associated bound may be taken not just as a rule of thumb for the behavior of the error under deterministic rounding, but as a rigorous bound on the growth of the forward error when using stochastic rounding.

%--------------------------------------------------------
\bibliographystyle{siamplain}
\bibliography{references}
%--------------------------------------------------------

\end{document}